\theoremstyle{definition}
\newtheorem{theorem}{Theorem}
\newtheorem{proposition}{Proposition}[section]
\newtheorem*{proposition*}{Proposition}
\newtheorem{defn}{Definition}[section]
\newtheorem{corollary}{Corollary}[section]
\newtheorem{conjecture}{Conjecture}
\newtheorem{lemma}{Lemma}[section]
\newcommand{\R}{\mathbb{R}}
\newcommand{\Address}{{
  \bigskip
  \footnotesize
  \textsc{Department of Mathematics and Statistics, University of Montreal, C.P.\, 6128 Succ. Centre-Ville, Montreal, QC, H3C 3J7, Canada}\par\nopagebreak
  \textit{E-mail address:} \texttt{filip.brocic@umontreal.ca}\hspace{5mm}  
}}
\title{\vspace{-2.0cm}A note on the capacities of Lagrangian $p$-sum}
\author{Filip Bro\'ci\'c}
\begin{document}

\maketitle

\begin{abstract}
In this short note, we construct an explicit embedding of the rescaling of the $p$-sum $K\oplus_p K^{\circ}$ of the centrally symmetric convex domain $K$ and its polar $K^{\circ}$ to the product $K \times K^{\circ}$. The rescaling constant is sharp in some cases. Additionally, we comment on the strong Viterbo conjecture for $K\oplus_p K^{\circ}$.
\end{abstract}

\section{Introduction}

In \cite{Br23}, the author constructed a symplectic embedding $e:B^{2n}(4)\to K \times K^{\circ}$ from the ball of capacity $4$ to the product $K \times K^{\circ}$, in the case when $K$ is the unit ball of an Euclidian norm, and $K^{\circ} = \{y\in \R^n \mid \langle y, x \rangle, \forall x \in K \}$ is the polar set. In Proposition \ref{embedding p sum} we generalize this construction to a larger class of domains.  Our domains are constructed using an arbirtrary norm $\| \cdot \|:\mathbb{R}^n \to [0, +\infty)$ on $\mathbb{R}^n$. Let $\| \cdot \|_*$ be the dual norm to $\| \cdot \|$, considered also on $\mathbb{R}^n$. For $p\in[1, \infty)$ set $\|(x,y)\|_{p,*}:= (\|x \|^p + \| y \|_*^p)^{1/p}$ and denote
$$
K\oplus_p K^{\circ} := \{ (x,y) \in \R^{2n} \mid \|x \|^p + \| y \|_*^p <1\},
$$
the unit ball in the norm $\|(x,y)\|_{p,*}$, where $K:= \{x \in \R^n \mid \| x \| < 1\}$ is the unit ball in the norm $\| \cdot \|$, and $K^{\circ}$ is its polar set, which coincides with the unit ball in the norm $\| \cdot \|_*$. For $p=\infty$ we have $\|(x,y)\|_{\infty,*} = \max\{\| x\|, \| y \|_*\}$, i.e., $K\oplus_{\infty} K^{\circ} = K \times K^{\circ}$.  In Proposition \ref{embedding p sum}, we construct a symplectic embedding from a rescaling of $K\oplus_p K^{\circ}$ to $K \times K^{\circ}$. Our construction immediately implies estimates of the \textit{symplectic} capacities of $K\oplus_p K^{\circ}$. 

\begin{defn}
Symplectic capacity is a map $c: \mathcal{O}(\R^{2n}) \to [0, +\infty]$ from the set of open subsets of $\R^{2n}$ to non-negative real numbers such that:
\begin{itemize}
\item[1.)] (Monotonicity) If there is a symplectic embedding $\psi: \mathcal{V} \to \mathcal{U}$ then $c(\mathcal{V}) \leq c(\mathcal{U})$;
\item[2.)] (Rescaling) If $\lambda>0$ then $c(\lambda \mathcal{U}) = \lambda^2 c(\mathcal{U})$;
\item[3.)] (Non-triviality) $0<c(B^{2n}(1)) \leq c(Z^{2n}(1)) < \infty$,
\end{itemize}
where $B^{2n}(R):=\{z\in \R^{2n} \mid \pi \|z\|^2 \leq R\}$, and $Z^{2n}(R)=B^{2}(R) \times \R^{2n-2}$ is the symplectic cylinder. Additionally, capacity is called \textit{normalized} if it satisfies $c(B^{2n}(1)) = c(Z^{2n}(1)) = 1$.
\end{defn}

The $p$-sum of 2-dimensional Euclidian Lagrangian discs, and their Gromov width, was studied in \cite{OR22}. Symplectic capacities of the symplectic $p$-sum were considered in \cite{KO23}. Here, we prove the following theorem for all centrally symmetric convex domains $K \subset \R^n$.

\begin{theorem}\label{main thm}
For every symplectic capacity $c$
$$
c(K\oplus_p K^{\circ}) \leq \frac{\Gamma^{2}(1+\frac{1}{p})}{\Gamma (1+\frac{2}{p})} c(K\times K^{\circ}).
$$
\end{theorem}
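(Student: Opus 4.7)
The plan is to apply Proposition \ref{embedding p sum} in a direct way and then invoke the two defining axioms of a symplectic capacity. I expect the proposition to provide a symplectic embedding of the form
$$
e : \lambda_p \cdot (K\oplus_p K^{\circ}) \hookrightarrow K \times K^{\circ},
$$
where $\lambda_p$ is the explicit constant $\lambda_p = \sqrt{\Gamma(1+\tfrac{2}{p})}/\Gamma(1+\tfrac{1}{p})$. This particular value is exactly what one should expect: for $n=2$ the area of the unit ball of $\ell^p$ equals $4\Gamma^2(1+1/p)/\Gamma(1+2/p)$, while the area of the square $K_\infty = [-1,1]^2$ equals $4$, so the ratio $\lambda_p^{-2}$ is the ratio of a $2$-dimensional $\ell^p$-disc to the corresponding $\ell^\infty$-square. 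This is consistent with the idea that the embedding of Proposition \ref{embedding p sum} acts, roughly, by a fibrewise area-preserving ``$p$-disc to square'' straightening.

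Granting the existence of the embedding $e$, the proof is then essentially a one-line application of the capacity axioms. First I would apply the monotonicity axiom to $e$ to obtain
$$
c\bigl(\lambda_p \cdot (K\oplus_p K^{\circ})\bigr) \leq c(K \times K^{\circ}).
$$
Next I would apply the rescaling axiom to the left-hand side, which yields
$$
\lambda_p^{2}\, c(K\oplus_p K^{\circ}) \leq c(K \times K^{\circ}).
$$
Dividing through by $\lambda_p^{2}$ and substituting the explicit value
$\lambda_p^{-2} = \Gamma^{2}(1+\tfrac{1}{p})/\Gamma(1+\tfrac{2}{p})$
recovers exactly the inequality of the theorem.

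Since the theorem is stated for every symplectic capacity $c$, no normalization needs to be imposed and no further work is required after invoking the two axioms. The only genuine content therefore lies in Proposition \ref{embedding p sum}, which is independently established in the paper; in particular, the main obstacle of the argument (constructing the embedding $e$ with the sharp dilation constant $\lambda_p$) is absorbed into that earlier result. Consequently, the proof of Theorem \ref{main thm} should be presented as a short corollary: cite Proposition \ref{embedding p sum}, invoke monotonicity, invoke rescaling, and simplify.
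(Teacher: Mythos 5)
Your proposal is correct and coincides with the paper's own argument: the paper derives Theorem \ref{main thm} directly from Proposition \ref{embedding p sum} (which provides exactly the embedding $e:\frac{\sqrt{\Gamma(1+\frac{2}{p})}}{\Gamma(1+\frac{1}{p})}(K\oplus_p K^{\circ})\to K\times K^{\circ}$ you anticipated) together with the monotonicity and rescaling axioms. No further review is needed.
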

\begin{proof}
This follows from Proposition \ref{embedding p sum}, and the rescaling axiom for symplectic capacities.
\end{proof}

Main examples of normalized capacities are \textit{Gromov width} $$Gr(\mathcal{U}) = \sup\{ R \mid e: B^{2n}(R) \to \mathcal{U}, \hspace{1mm} e^*{\omega_{st}} = \omega_{st} \},$$ where $\omega_{st} = \sum dx_i \wedge dy_i$, and cylindircal capacity $$c_{Z}(\mathcal{U}) = \inf \{R \mid e: \mathcal{U} \to Z^{2n}(R), \hspace{1mm} e^*{\omega_{st}} = \omega_{st}\}.$$ The fact that these are (normalized) symplectic capacities is a reformulation of the famous Gromov's non-squeezing theorem (see \cite{Gr85}). Other examples of normalized symplectic capacities are Ekhlan-Hofer capacity and Hofer-Zehnder capacity (see \cite{EH89, HZ90}). They coincide on convex sets, and for convex set $X \subset R^{2n}$ they are equal to the minimal period of the Reeb orbit on $\partial X$, and we denote this quantity $c_{EHZ}(X)$. In \cite{Ki19}, the capacity $c_{EHZ}$ is calculated for convex polytopes. It was conjectured in \cite[\S5]{Vi00} that for any normalized symplectic capacity, if $X$ is a convex set then 
$$
\frac{c^{n}(X)}{n!}\leq \mathrm{\mathrm{Vol}}(X).
$$
This conjecture would follow from what is known as the strong Viterbo conjecture:
\begin{conjecture}[Strong Viterbo conjecture]
All normalized capacities coincide on convex sets.
\end{conjecture}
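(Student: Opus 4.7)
The plan is to exploit the universal sandwich: for every normalized capacity $c$ and every open $X \subset \R^{2n}$ one has $\mathrm{Gr}(X) \le c(X) \le c_Z(X)$, a direct consequence of the monotonicity and non-triviality axioms applied to a maximal ball embedding into $X$ and a minimal embedding of $X$ into a cylinder. Consequently the conjecture reduces to the single equality $\mathrm{Gr}(X) = c_Z(X)$ for every convex $X \subset \R^{2n}$. Since $\mathrm{Gr}(X) \le c_Z(X)$ is built into the axioms (in fact it is Gromov's non-squeezing), the real task is to produce, for every convex body $X$, a symplectic embedding of a ball of capacity $c_Z(X)$ into $X$.

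First I would seek a description of $c_Z(X)$ on convex bodies amenable to such a construction. On convex sets $c_{EHZ}(X)$ equals the minimal action of a closed characteristic on $\partial X$, and one has $c_{EHZ}(X) \le c_Z(X)$; taking as a working hypothesis the equality $c_{EHZ}(X) = c_Z(X)$ (itself a substantial known sub-conjecture, verified in many cases), the problem becomes: given the shortest closed characteristic $\gamma \subset \partial X$ of action $A$, construct a symplectic embedding $B^{2n}(A) \hookrightarrow X$. Action-angle coordinates along $\gamma$ provide a thin tubular neighbourhood symplectomorphic to a small transverse ball times a disc of area $A$; convexity of $X$ should then permit a monotone ``radial'' symplectic flow---in the spirit of the explicit construction used in Proposition \ref{embedding p sum}---which inflates the transverse factor while preserving the longitudinal area, eventually filling the convex body up to measure zero.

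I would test this mechanism first on the $p$-sums $K \oplus_p K^\circ$, where Theorem \ref{main thm} already bounds all capacities simultaneously and the minimal characteristic is accessible from the norms $\| \cdot \|$ and $\| \cdot \|_*$, before attempting to abstract the construction to arbitrary convex $X$. The hope is that a uniform inflation procedure, parametrised by a support function or a Minkowski functional, will produce the required embedding in general without relying on ad hoc symmetries.

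The main obstacle---and the reason the conjecture has resisted direct attack---is the absence of any classification of convex bodies up to symplectomorphism, together with the lack of a symplectic invariant that is simultaneously bounded above by $\mathrm{Gr}$, bounded below by $c_Z$, and explicitly computable on arbitrary convex $X$. Every known proof of $\mathrm{Gr} = c_Z$ relies on special structure (ellipsoids, polydiscs, monotone toric domains, certain Lagrangian products), and none of these arguments generalise to arbitrary convex geometry. A realistic path is therefore incremental: enlarge the list of convex bodies for which the equality is verified, using continuous deformations such as $K \oplus_p K^\circ$ and the embedding of Proposition \ref{embedding p sum} to interpolate between product and ball-type domains, and use the patterns so uncovered to guide a general construction; bridging the remaining gap to \emph{all} convex domains is the genuinely hard step.
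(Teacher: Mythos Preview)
The statement under discussion is labelled \emph{Conjecture} in the paper, and the paper offers no proof of it whatsoever. It is a famous open problem. The paper merely records it, observes the standard reduction (that $\mathrm{Gr}(X)=c_Z(X)$ for all convex $X$ is equivalent to the conjecture), and then proves the conjecture only for the very special family $B^2(1)\oplus_p B^2(1)$ with $p\ge 2$, by combining the explicit embedding of Proposition~\ref{embedding p sum} with the Gromov-width computation of Ostrover--Ramos.

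Your proposal is not a proof either, and to your credit you say so at the end. What you have written is a research programme, not an argument. Two concrete gaps:
\begin{itemize}
\item You take $c_{EHZ}(X)=c_Z(X)$ on convex sets as a ``working hypothesis''. This is not known in general; it is itself a form of the conjecture you are trying to prove. Assuming it does not reduce the difficulty.
\item The heart of your plan---inflating a tubular neighbourhood of the minimal closed characteristic into a full ball $B^{2n}(A)$ inside $X$ via some ``monotone radial symplectic flow''---is purely heuristic. No such construction is known for general convex $X$, and the embedding in Proposition~\ref{embedding p sum} does not proceed this way: it exploits the very specific fibred structure of $K\oplus_p K^\circ$ over $K$ and says nothing about neighbourhoods of closed characteristics.
\end{itemize}
So there is nothing to compare: the paper has no proof of the conjecture, and neither do you. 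Your reduction to $\mathrm{Gr}=c_Z$ matches what the paper states, but everything beyond that is speculation about an open problem.
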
 
It is clear from the definition of a symplectic capacity, that every normalized capacity $c$ is between Gromov width and cylindrical capacity, i.e., $Gr(\mathcal{U}) \leq c(\mathcal{U}) \leq c_{Z}(\mathcal{U})$. Hence, proving strong Viterbo conjecture is equivalent to proving that $Gr(X) = c_{Z}(X)$ for every convex set $X$.

By projecting to a smart choice of a symplectic plane, it follows from \cite[Remark 4.2]{AAKO14} that the cylindrical capacity of $K \times K^{\circ}$ satisfies $c_Z (K \times K^{\circ}) \leq 4$. The main result of \cite{AAKO14} shows\footnote{By proving this, they were able to relate Viterbo conjecture with Mahler's conjecture from Convex Geometry, which states that $\mathrm{Vol}(K \times K^{\circ})\geq 4^n / n!$.} that $c_{EHZ}(K \times K^{\circ})= 4$. Consequently, since every normalized capacity is bounded by the cylindrical capacity, we have $c_Z (K \times K^{\circ}) = 4$. From Theorem \ref{main thm} we get $$
c_{Z}(K\oplus_p K^{\circ})  \leq 4 \frac{\Gamma^{2}(1+\frac{1}{p})}{\Gamma (1+\frac{2}{p})}.$$
Ostrover and Ramos proved in \cite{OR22} that the $p$-sum of 2-dimensional Euclidian Lagrangian discs is a toric domain. They calculated the Gromov width of $K\oplus_p K^{\circ}$, for $K = \{(x_1, x_2) \in \R^2\mid x_1^2 + x_2^2 \leq 1\}$. In the case $p \geq 2$ they showed:
$$
Gr(K\oplus_p K^{\circ}) = 4 \frac{\Gamma^{2}(1+\frac{1}{p})}{\Gamma (1+\frac{2}{p})}.
$$
As a corollary, we get:

\begin{corollary}
If $p\geq 2$, the strong Viterbo conjecture holds for $$B^2(1) \oplus_{p} B^2(1)=\{(x,y) \in \R^4 \mid \|x\|_{st}^p + \|y\|_{st}^p < 1\}.$$
\end{corollary}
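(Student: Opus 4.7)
The plan is to sandwich every normalized symplectic capacity between the Gromov width $Gr$ and the cylindrical capacity $c_Z$, and then to check that these two extremes actually agree on the domain $X := B^2(1)\oplus_p B^2(1)$ for $p\geq 2$. Since every normalized capacity $c$ satisfies $Gr(X)\leq c(X)\leq c_Z(X)$, the equality $Gr(X)=c_Z(X)$ immediately forces all normalized capacities to take the same value on $X$, which is exactly the content of the strong Viterbo conjecture on $X$.

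First I would observe that for $K = B^2(1)$ the Euclidean unit disc in $\R^2$, the dual norm equals the norm itself, so $K^{\circ}=K$ and $X$ coincides with the set $K\oplus_p K^{\circ}$ to which Theorem~\ref{main thm} applies. Combining Theorem~\ref{main thm} with the equality $c_Z(K\times K^{\circ})=4$ (explained in the introduction via \cite[Remark 4.2]{AAKO14} together with the main result of \cite{AAKO14}) then yields the upper bound
$$
c_Z(X) \;\leq\; 4\,\frac{\Gamma^{2}(1+\tfrac{1}{p})}{\Gamma(1+\tfrac{2}{p})}.
$$

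The matching lower bound is the Ostrover--Ramos computation recalled just before the corollary: for $p\geq 2$ they prove
$$
Gr(X) \;=\; 4\,\frac{\Gamma^{2}(1+\tfrac{1}{p})}{\Gamma(1+\tfrac{2}{p})}.
$$
This pins down $Gr(X)=c_Z(X)$, and the corollary follows.

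There is essentially no technical obstacle left once Theorem~\ref{main thm} is granted: both nontrivial ingredients---the cylindrical bound descending from \cite{AAKO14} through Theorem~\ref{main thm}, and the Gromov-width calculation of \cite{OR22}---are already in place. The hypothesis $p\geq 2$ enters only to guarantee that the Ostrover--Ramos formula is available, and the only elementary verification is the self-polarity of $B^2(1)$, which is immediate from the definition of the polar set using the Euclidean inner product.
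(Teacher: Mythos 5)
Your proposal is correct and follows exactly the route the paper takes: the displayed bound $c_Z(K\oplus_p K^{\circ})\leq 4\,\Gamma^2(1+\frac1p)/\Gamma(1+\frac2p)$ obtained from Theorem~\ref{main thm} together with $c_Z(K\times K^{\circ})=4$, matched against the Ostrover--Ramos computation of the Gromov width for $p\geq 2$, and the sandwich $Gr\leq c\leq c_Z$ for normalized capacities. Nothing to add.
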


The same result follows from \cite{GHR22}, where they proved that the strong Viterbo conjecture holds for \textit{monotone} toric domains in $\R^4$. This result was generalized to arbitrary dimension in \cite{CGH23}. However, for centrally symmetric and convex $K$ it is not known if the $p$-sym $K\oplus_p K^{\circ}$ is symplectomorphic to a toric domain in higher dimensions. 

Our construction also imples a new proof of \cite[Corollary 5.4]{BK22} for the particular case when $X=K\oplus_{2} K^{\circ}$.

\begin{theorem}\label{EHZ capacity thm}
$$
2 + \frac{1}{n}\leq c_{EHZ}(K\oplus_{2}K^{\circ})\leq \pi.
$$
\end{theorem}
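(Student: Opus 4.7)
\emph{Proof plan.} For the upper bound, apply Theorem~\ref{main thm} with $p=2$. The Gamma factor simplifies to $\Gamma^2(3/2)/\Gamma(2)=(\sqrt{\pi}/2)^2/1=\pi/4$, and since $c_{EHZ}(K\times K^\circ)=4$ by \cite{AAKO14}, we obtain $c_{EHZ}(K\oplus_2 K^\circ)\leq \pi$ at once.

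The lower bound is subtler. A bare monotonicity argument delivers only $c_{EHZ}(K\oplus_2 K^\circ)\geq 2$: if $\|x\|,\|y\|_*<1/\sqrt{2}$ then $\|x\|^2+\|y\|_*^2<1$, so $\tfrac{1}{\sqrt{2}}(K\times K^\circ)\subset K\oplus_2 K^\circ$, and the rescaling axiom combined with \cite{AAKO14} gives $(1/\sqrt{2})^2\cdot 4=2$. The substantive content of the theorem is the extra $1/n$, which cannot come from such a coarse inclusion.

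My plan to obtain the $1/n$ refinement is the following. First reduce to $K$ a centrally symmetric convex polytope by continuity of $c_{EHZ}$ on compact convex bodies. Then apply the Haim-Kislev formula of \cite{Ki19}, which expresses $c_{EHZ}$ of a convex polytope as a maximum over cyclic orderings of its facets of an explicit quadratic functional. Finally, exhibit an $(n+1)$-cycle of facets of $K\oplus_2 K^\circ$, alternating between facets on which $\|x\|$ is active and facets on which $\|y\|_*$ is active, on which this functional evaluates to at least $2+1/n$. Geometrically, such a cycle should trace a closed broken characteristic that deforms the length-$4$ AAKO orbit on $\partial(K\times K^\circ)$ into a slightly longer orbit exploring the rounded corner of $K\oplus_2 K^\circ$; the extra $1/n$ accounts for the symplectic area picked up while mixing one additional pair of coordinates.

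The main obstacle is identifying the correct $(n+1)$-cycle of facets and evaluating the Haim-Kislev functional on it with enough precision to recover exactly $2+1/n$, rather than the weaker bound $2$ delivered by generic cycles. An alternative route, which I would try in parallel, is to bypass the combinatorics altogether by embedding into $K\oplus_2 K^\circ$ a tractable convex subset with known $c_{EHZ}\geq 2+1/n$, for instance a suitably rescaled Lagrangian bi-simplex built from $n+1$ extreme points of $K\times\{0\}$ together with $n+1$ extreme points of $\{0\}\times K^\circ$. Constructing such a subset uniformly in $K$ and computing its capacity is itself delicate, which is why the Haim-Kislev combinatorial approach looks more promising.
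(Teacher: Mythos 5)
Your upper bound is correct and is essentially the paper's argument: the paper applies the $p=2$ embedding of Proposition~\ref{FromEuclidian} (the special case underlying Theorem~\ref{main thm}) together with $c_{EHZ}(K\times K^{\circ})=4$ to get $\tfrac{4}{\pi}c_{EHZ}(K\oplus_2K^{\circ})\leq 4$, which is exactly your computation with the Gamma factor $\pi/4$.

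The lower bound, however, is a genuine gap. You correctly diagnose that inclusion-plus-rescaling only gives $2$ and that the $1/n$ is the substantive content, but your proposed route via the Haim-Kislev facet formula is left entirely unexecuted: you do not identify the $(n+1)$-cycle of facets, do not evaluate the functional on it, and you would additionally need to justify that such a cycle can be chosen uniformly over all centrally symmetric $K$ (after polytope approximation) so as to produce exactly $2+1/n$ in the limit. The alternative bi-simplex idea is likewise only sketched. The paper takes a completely different and much shorter route: it invokes the known inequality $c_{EHZ}(T)\geq \frac{2+1/n}{\|J\|_{T^{\circ}\to T}}$ for centrally symmetric convex bodies $T$ (from \cite{AK17}; see also \cite{GO16}), where $\|J\|_{T^{\circ}\to T}=\sup_{u,v\in T^{\circ}}\langle Ju,v\rangle$. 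The entire work then reduces to showing $\|J\|_{T^{\circ}\to T}=1$ for $T=K\oplus_2K^{\circ}$, which follows from the self-duality property $JT^{\circ}=T$ (Lemma~\ref{dual of 2-sum}) together with an elementary estimate of the form $2(\lambda\|u_x\|\|u_x\|_*+\mu\|u_y\|\|u_y\|_*)\leq 2$ and an explicit pair $u=(0,u_y)$, $v=(\nabla N(u_y),0)$ realizing the value $1$. So the missing idea in your proposal is precisely this capacity-versus-$\|J\|$ theorem: it packages the $2+1/n$ once and for all, and converts the problem into a norm computation that exploits the special duality structure of the $2$-sum. Without it (or a completed facet-cycle computation), your argument establishes only $2\leq c_{EHZ}(K\oplus_2K^{\circ})\leq\pi$.
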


\begin{proof}
Using \cite[Theorem 1.1]{AK17}, we have for a centrally symmetric convex set $T$
$$
\frac{2+ 1/n}{\| J\|_{T^{\circ} \to T}} \leq c_{EHZ}(T),
$$
where $\| J\|_{T^{\circ} \to T} = \sup_{u,v \in T^{\circ}} \langle J u , v \rangle$ (see also \cite[Theorem 1.6]{GO16}). For $T = K\oplus_{2}K^{\circ}$ using Lemma \ref{dual of 2-sum} we have $J T^{\circ} = T$, which further implies
\begin{equation} \label{normJ}
\sup_{u,v \in T^{\circ}} \langle J u , v \rangle = \sup\left\{ \lambda \langle u_x, u_x\rangle + \mu \langle u_y, u_y\rangle  \mid  \begin{aligned}\|u_x\|^2 + \|u_y\|_{*}^2 &\leq 1,\\\lambda^2 \|u_x\|^2 + \mu^2 \|u_y\|_{*}^2 &\leq 1\end{aligned} \right\}.
\end{equation}
Where $(u_x, u_y)$ is a coordinate representation of $u\in \R^{2n} = \R^{n} \times \R^{n}$. The equality in equation (\ref{normJ}) follows from analyzing the critical points of the function $$F(u,v, \lambda, \mu)= \langle J u, v \rangle - \frac{\lambda}{2}(\|u\|^2_{2,*} -1) - \frac{\mu}{2}(\|Jv\|^2_{2,*} -1).$$   Also, we have
$$
 2 (\lambda \|u_x\| \|u_x\|_{*} + \mu \|u_y\| \|u_y\|_{*}) \leq \lambda^2 \|u_x\|^2 + \|u_x\|_{*}^2 + \mu^2 \|u_y\|_{*}^2 + \|u_y\|^2 \leq 2,
$$
which implies $\| J\|_{T^{\circ} \to T} \leq 1$. On the other hand, for $\|u_y\|=1$, setting $u=(0, u_y)$ and $v=(\nabla N(u_y), 0)$ we get
$$
\langle Jv, u \rangle = \langle \nabla N(u_y), u_y \rangle = \|u_y\| = 1,
$$
since both $u, v \in T^{\circ}$ (see Lemma \ref{DualNorm}), we get $\| J\|_{T^{\circ} \to T}=1$.

For the other inequality, we use the symplectic embedding from Proposition \ref{FromEuclidian}, together with the result $c_{EHZ}  (K\times K^{\circ}) = 4$ from \cite{AAKO14}:

$$
\frac{4}{\pi} c_{EHZ}(K\oplus_{2}K^{\circ}) = c_{EHZ}\left( \frac{2}{\sqrt{\pi} } K\oplus_{2}K^{\circ} \right) \leq  c_{EHZ}  (K\times K^{\circ}) = 4.
$$
\end{proof}

\subsection*{Acknowledgements}
I am very thankful to my Ph.D. advisors Octav Cornea and Egor Shelukhin for their constant support during my studies. This work benefited a lot from discussions with Dylan Cant, Pazit Haim-Kislev, and Yaron Ostrover. I would also like to thank Vinicius Ramos and Dan Cristofaro-Gardiner for their comments on the earlier version of this document. This research is partially supported by \'Etudes sup\'erieures et postdoctorales (ESP) scholarship and Fondation Courtois.

\section{Proofs}

We will work with norms that are smooth (or at least $C^1$) away from the origin. Set $N(x):= \| x \|$, and denote $\nabla N(x)$ the gradient with respect to the standard inner product.

\begin{proposition}\label{FromEuclidian}
There exists a relative symplectic embedding $$e:\sqrt{4/\pi} (K\oplus_{2}K^{\circ}) \to K\times K^{\circ}.$$
\end{proposition}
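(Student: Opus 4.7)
The plan is to construct $e$ explicitly, generalizing the Euclidean construction of \cite{Br23} via polar coordinates adapted to the dual pair $(N, N^*)$, where $N = \|\cdot\|$ and $N^* = \|\cdot\|_*$. For $(x,y)\in\R^n\times\R^n$ with $x,y\neq 0$, write $x = r\hat x$ and $y = s\hat y$ with $r = N(x)$, $s = N^*(y)$, $\hat x\in\partial K$, $\hat y\in\partial K^\circ$. A direct computation gives
\[
\omega = \langle\hat x,\hat y\rangle\,dr\wedge ds + s\,dr\wedge\alpha + r\,\beta\wedge ds + rs\,\gamma,
\]
with $\alpha = \sum_i \hat x_i\,d\hat y_i$, $\beta = \sum_i\hat y_i\,d\hat x_i$, and $\gamma = \sum_i d\hat x_i\wedge d\hat y_i$.

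On the Legendre-dual locus $\mathcal L = \{\hat y = \nabla N(\hat x)\}$, of dimension $n+1$, the Euler identity for the homogeneous-of-degree-one norm $N$ gives $\langle\hat x,\nabla N(\hat x)\rangle = N(\hat x) = 1$, and its second-order consequence $\sum_i \hat x_i\,\partial_i\partial_j N(\hat x) = 0$ together with $dN|_{\partial K} = 0$ forces $\alpha|_{\mathcal L} = \beta|_{\mathcal L} = \gamma|_{\mathcal L} = 0$. Hence $\omega|_{\mathcal L} = dr\wedge ds$, with kernel equal to the angular directions tangent to $\partial K$. Geometrically, each $2$-plane $P_{\hat x} = \R(\hat x,0)+\R(0,\nabla N(\hat x))$ inherits the standard area form $dr\wedge ds$, and $\sqrt{4/\pi}(K\oplus_2 K^\circ)$ (resp.\ $K\times K^\circ$) cuts out in $P_{\hat x}$ the open disc $\{r^2+s^2<4/\pi\}$ (resp.\ the open square $\{|r|,|s|<1\}$), both of area $4$.

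I would choose a 2D area-preserving embedding $\psi:\{r^2+s^2<4/\pi\}\to(-1,1)^2$ (whose existence follows from Moser's theorem applied to contractible planar domains of equal area), with the additional property that $\psi$ maps the coordinate axes $\{s=0\}$ and $\{r=0\}$ into themselves (to ensure the relative condition on the Lagrangians $\{y=0\}$ and $\{x=0\}$). On $\mathcal L$, set $e(r\hat x, s\nabla N(\hat x)) = (R\hat x, S\nabla N(\hat x))$ with $(R,S)=\psi(r,s)$. The extension off $\mathcal L$ would be carried out via the time-$1$ flow of a (possibly time-dependent) Hamiltonian of the form $H_t(x,y) = G_t(N(x), N^*(y))$, which preserves $\mathcal L$ and restricts to a 2D Hamiltonian flow on each polar-aligned fiber $P_{\hat x}$; choosing the family $G_t$ appropriately yields a global symplectomorphism of $\R^{2n}$ whose restriction to $\mathcal L$ agrees with $\psi$.

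The main obstacle is to verify that the resulting map is genuinely symplectic on all of $\R^{2n}$ and that its image lies in $K\times K^\circ$—not only on the thin locus $\mathcal L$, but also throughout its complement, where $\langle\hat x,\hat y\rangle<1$ and the correction terms $s\,dr\wedge\alpha$, $r\,\beta\wedge ds$, $rs\,\gamma$ in $\omega$ contribute non-trivially. Controlling these corrections will likely require writing $e$ by an explicit formula involving both $\nabla N$ and $\nabla N^*$, matching the Euclidean computation in \cite{Br23}. The most delicate point is the behavior near the Lagrangian boundary: since $\sqrt{4/\pi}>1$, the Lagrangian ``disc'' $\sqrt{4/\pi}K$ exceeds $K$ in size, so purely radial compression cannot embed the source into the target, and one must rely on the angular ``folding'' built into $\psi$ to absorb this excess.
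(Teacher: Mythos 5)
Your construction is pinned down only on the Legendre-dual locus $\mathcal{L}=\{\hat y=\nabla N(\hat x)\}$, which is an $(n+1)$-dimensional subset of the $2n$-dimensional domain. What you verify there (the vanishing of $\alpha,\beta,\gamma$ on $\mathcal{L}$, the area-$4$ matching of the planar sections $\{r^2+s^2<4/\pi\}$ and $(-1,1)^2$) is correct, but it does not yield an embedding of $\sqrt{4/\pi}(K\oplus_2K^{\circ})$. The proposed extension off $\mathcal{L}$ by a Hamiltonian $H_t=G_t(N(x),N^*(y))$ does not do what you need: its flow satisfies $\frac{d}{dt}N(x)=\partial_sG_t\,\langle\nabla N(x),\nabla N^*(y)\rangle$ (similarly for $N^*(y)$), and the pairing $\langle\nabla N(x),\nabla N^*(y)\rangle$ equals $1$ only on $\mathcal{L}$; elsewhere it is an angle-dependent quantity in $[-1,1]$, so the flow does not reduce to a planar flow in $(r,s)$ and you lose control of where the image of the bulk of the domain lands. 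You flag this yourself as ``the main obstacle,'' but that obstacle is precisely the content of the proposition, so the proposal is not a proof. Your closing claim that a ``purely radial compression cannot embed the source'' and that angular folding is required is also off the mark: the paper's map is exactly a radial (in the norm $N$) compression of the base Lagrangian.

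The paper avoids the fibration picture entirely. It takes the planar map $f$ of \cite{Br23}, sets $\varphi(x)=\frac{f(\|x\|)}{\|x\|}x$ on the base Lagrangian, and lifts it to $e(x,y)=\bigl(\varphi(x),(D\varphi(x)^{-1})^*y\bigr)$, which is symplectic for free because $e^*\lambda_{st}=\lambda_{st}$. The entire proof then reduces to the pointwise estimate $\|D\varphi(x)h\|\geq f'(\|x\|)\|h\|$, obtained from $\|\nabla N(x)\|_*=1$ (Lemma \ref{DualNorm}) and the inequality $f(t)/t>f'(t)$; this gives $\|(D\varphi(x)^{-1})^*y\|_*\leq\|y\|_*/f'(\|x\|)$ and hence containment of the image in $K\times K^{\circ}$. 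If you want to salvage your approach, replace the Hamiltonian extension by this explicit cotangent-type lift; the lift automatically agrees with your planar map on each $P_{\hat x}$ while also being defined and symplectic away from $\mathcal{L}$.
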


\begin{proof}
Let  $e:B^2(4) \to (-1, 1) \times (-1, 1)$ be a symplectic embedding from \cite[Lemma 3.7.]{Br23}. It is of the form $e(x,y) = \left(f(x), \frac{1}{f'(x)}y\right)$, where $$f(x)=\frac{2}{\pi}\mathrm{arcsin}\left(\sqrt{\frac{\pi}{4}}x\right) + \frac{x}{2} \sqrt{\frac{4}{\pi} - x^2}.$$
For higher dimensions we set $\varphi(x):= \frac{f(\|x\|)}{\|x\|}x$. Since $f$ is odd and analytic $\varphi$ is smooth. It is easy to see that, for $x \neq 0$
$$
D\varphi(x)h = \left(f'(\|x\|)-\frac{f(\|x\|)}{\|x\|}\right) \langle \nabla N(x) , h \rangle \frac{x}{\|x\|} + \frac{f(\|x\|)}{\|x\|}h,
$$
and
\begin{align}
     \|D\varphi(x)h\| &\geq  \frac{f(\|x\|)}{\|x\|} \|h\| - \left\vert f'(\|x\|)-\frac{f(\|x\|)}{\|x\|} \right\vert |\langle \nabla N(x) , h \rangle|  \nonumber\\
     & \geq   \frac{f(\|x\|)}{\|x\|} \|h\|  - \left(\frac{f(\|x\|)}{\|x\|} - f'(\|x\|)\right) \| \nabla N(x) \|_* \| h \| \nonumber \\
     &\geq f'(\|x\|) \|h\|. \label{LowerBoundonDFi}
\end{align}
   The second inequality follows from $\frac{f(t)}{t} > f'(t)$ for $t>0$ and $|\langle \nabla N(x) , h \rangle| \leq \| \nabla N(x) \|_* \| h \|$, and the last inequality follows from Lemma \ref{DualNorm} where we show that $\| \nabla N(x) \|_* = 1$ . Since $f'(t)>0$ for $t\in \left[0, 2 \sqrt{\frac{ab}{\pi}}\right)$ we get that $D\varphi(x)$ is invertible. Now define symplectic embedding $e:\sqrt{4/\pi} (K\oplus_{2}K^{\circ}) \to K\times K^{\circ}$ as $$e(x,y):=\left(\varphi(x), (D\varphi(x)^{-1})^* y\right).$$ Since $\| D \varphi (x) \|_{op} = \| D \varphi (x)^* \|_{op} \geq f'(\| x \|)$, we have $\| (D \varphi (x)^{-1})^* \|_{op} \leq 1/ f'(\|x\|)$, which implies that $Im(e) \subset K\times K^{\circ}$.
 To see that such embedding $e$ is symplectic it is enough to note that $e^* \lambda_{st} = \lambda_{st}$ where $\lambda_{st} = \sum y_i dx_i$.
\end{proof}

More generally, take two continuous functions $g:[0,a] \to [0, +\infty)$ and $h:[0,b] \to [0, +\infty)$, which are positive for $x\in[0,a)$ (respectively $x\in [0,b)$) and $\int_0^a g(t)dt = \int_0^b h(t) dt.$ Define a function $f:[0,a] \to [0,b]$ implicitly by
$$
\int_0^x g(t)dt = \int_0^{f(x)} h(t) dt.
$$
If $f''\leq 0$, the diffeomorphism $\varphi: B^n(a) \to B^n(b)$ defined by $\varphi(x):= (f(\|x\|)/\|x\|) x$ induces a symplectic embedding from the set $B^{2n}_{g}:= \{(x,y) \in \mathbb{R}^{2n} \mid \|x\| \leq a, \| y \|_* \leq g(\| x \|) \}$ to $B^{2n}_{h}:= \{(x,y) \in \mathbb{R}^{2n} \mid \|x\| \leq b, \| y \|_* \leq h(\| x \|) \}$. 

Taking $b=1$, $h=1$ and $g(t):=\left(\frac{\Gamma (1+\frac{2}{p})}{\Gamma^2 \left(1+\frac{1}{p}\right)} - t^p\right)^{1/p}$ we get:

\begin{proposition}\label{embedding p sum}
There exists a symplectic embedding $e: \frac{\sqrt{\Gamma (1+\frac{2}{p})}}{\Gamma \left(1+\frac{1}{p}\right)} (K\oplus_p K^{\circ}) \to K\times K^{\circ}.$

\end{proposition}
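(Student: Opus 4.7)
The plan is to specialize the general cotangent-lift construction outlined in the paragraph immediately preceding the statement. Taking $h\equiv 1$ on $[0,1]$ (so $b=1$), the defining relation $\int_{0}^{x} g(t)\,dt = \int_{0}^{f(x)} h(t)\,dt$ collapses to $f(x) = \int_{0}^{x} g(t)\,dt$, so that $f'=g$ and the concavity hypothesis $f''\leq 0$ reduces to $g$ being non-increasing. Setting $C := \Gamma(1+\tfrac{2}{p})/\Gamma^{2}(1+\tfrac{1}{p})$, I would take $g(t) := (C^{p/2}-t^{p})^{1/p}$ on $[0,C^{1/2}]$, which is clearly decreasing and vanishes at $a := C^{1/2}$; this choice is forced by the requirement that $B^{2n}_{g} = \{(x,y)\mid \|x\|^{p}+\|y\|_{*}^{p}\leq C^{p/2}\}$ coincide with the rescaled $p$-sum $C^{1/2}(K\oplus_{p}K^{\circ})$ appearing in the statement.

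The next step is to verify the compatibility condition $\int_{0}^{C^{1/2}} g(t)\,dt = 1$. Via the substitution $t = C^{1/2}s$, it reduces to the Beta integral
\[
\int_{0}^{1}(1-s^{p})^{1/p}\,ds \;=\; \frac{1}{p}\,B\!\left(\tfrac{1}{p},\,1+\tfrac{1}{p}\right) \;=\; \frac{\Gamma^{2}(1+1/p)}{\Gamma(1+2/p)} \;=\; \frac{1}{C},
\]
giving $\int_{0}^{C^{1/2}} g = C\cdot(1/C) = 1 = \int_{0}^{1} h$, as required. Since $B^{2n}_{h} = K\times K^{\circ}$, the general construction will then produce a symplectic embedding from $C^{1/2}(K\oplus_{p}K^{\circ})$ into $K\times K^{\circ}$, which is exactly the content of the proposition.

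All that remains is to execute the construction explicitly. Define $\varphi:B^{n}(C^{1/2})\to B^{n}(1)$ by $\varphi(x) = (f(\|x\|)/\|x\|)\,x$ and set $e(x,y) := (\varphi(x),(D\varphi(x)^{-1})^{*}y)$. From here the argument proceeds word-for-word as in Proposition~\ref{FromEuclidian}: symplecticity follows from $e^{*}\lambda_{st}=\lambda_{st}$; the differential is $D\varphi(x)h = (f'(\|x\|)-f(\|x\|)/\|x\|)\langle\nabla N(x),h\rangle\,x/\|x\| + (f(\|x\|)/\|x\|)h$; concavity of $f$ (which yields $f(t)/t \geq f'(t)$) together with $\|\nabla N(x)\|_{*}=1$ from Lemma~\ref{DualNorm} gives the lower bound $\|D\varphi(x)h\|\geq f'(\|x\|)\|h\|$, hence $\|(D\varphi(x)^{-1})^{*}\|_{op}\leq 1/f'(\|x\|)$; and inside the source domain $\|y\|_{*}\leq g(\|x\|) = f'(\|x\|)$, forcing $\|(D\varphi(x)^{-1})^{*}y\|_{*}\leq 1$ and $\|\varphi(x)\|=f(\|x\|)\leq 1$.

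Since every individual step is a routine adaptation of the $p=2$ case of Proposition~\ref{FromEuclidian}, the only real content is the identification of the correct constant $C$ through the Beta integral, which is also what dictates the $C^{1/2}$ rescaling in the statement. The main pitfall I expect is merely book-keeping of exponents so that the integral-compatibility is satisfied as an \emph{equality}; everything else transfers from Proposition~\ref{FromEuclidian} without modification.
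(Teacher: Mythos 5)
Your proof is correct and follows the paper's own route exactly: specialize the general $(g,h)$ construction preceding the statement with $h\equiv 1$, $b=1$, and $g(t)=(C^{p/2}-t^{p})^{1/p}$ where $C=\Gamma(1+\tfrac{2}{p})/\Gamma^{2}(1+\tfrac{1}{p})$, check the compatibility integral via the Beta function, and transfer the estimates on $D\varphi$ verbatim from Proposition~\ref{FromEuclidian}. Note that your choice of $g$ (with $C^{p/2}$ rather than $C$ under the $p$-th root) is the one that actually makes the source domain equal to $C^{1/2}(K\oplus_{p}K^{\circ})$ and turns the integral condition $\int_{0}^{a}g=\int_{0}^{1}h$ into an equality for every $p$; the paper's displayed $g(t)=\bigl(\tfrac{\Gamma(1+2/p)}{\Gamma^{2}(1+1/p)}-t^{p}\bigr)^{1/p}$ accomplishes this only for $p=2$, so you have silently corrected a typo in the exponent.
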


Note that the construction of an embedding works only for $K$ with (at least) $C^1$ boundary. However, by approximating the non-smooth domains with smooth ones the conclusion of the Theorem \ref{main thm} still holds. The embedding $e$ is of class $C^1$, if the boundary of $K$ is smooth, one can take a smoothing of $\varphi$ near the origin and construct a smooth symplectic embedding $e: \frac{\sqrt{\Gamma (1+\frac{2}{p})}}{\Gamma \left(1+\frac{1}{p}\right)} (K\oplus_p K^{\circ}) \to (1+\epsilon)K\times K^{\circ}$, for every $\epsilon>0$. 

The embeddings from Proposition \ref{FromEuclidian} are not filling the volume, which we can see from the following Lemma (see \cite[Lemma 2.2.]{BK22}).

\begin{lemma}\label{VolumeOfEuclidian}
$$\mathrm{Vol}(K\oplus_{p}K^{\circ}) = \frac{\Gamma^2(\frac{n}{p} + 1)}{\Gamma(\frac{2n}{p}+1)}  \mathrm{Vol}(K\times K^{\circ}).$$
\end{lemma}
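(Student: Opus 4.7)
The plan is to reduce the computation to a Dirichlet-type integral in the two radial variables $r_1 = \|x\|$ and $r_2 = \|y\|_*$. The basic tool is that for any centrally symmetric convex body $L \subset \R^n$ with Minkowski functional $\|\cdot\|_L$ one has the radial integration formula
\begin{equation*}
\int_{\R^n} \phi(\|z\|_L)\, dz \;=\; n\,\mathrm{Vol}(L) \int_0^\infty \phi(r)\, r^{n-1}\, dr,
\end{equation*}
obtained by writing $z = r\omega$ with $\omega \in \partial L$ and using that the measure of $\{z : \|z\|_L \leq r\}$ equals $r^n \mathrm{Vol}(L)$. Applying this once in the $x$-variable with $L=K$ and once in the $y$-variable with $L=K^\circ$, I would rewrite
\begin{equation*}
\mathrm{Vol}(K\oplus_p K^\circ) \;=\; n^2\,\mathrm{Vol}(K)\,\mathrm{Vol}(K^\circ)\iint_{\substack{r_1,r_2>0 \\ r_1^p+r_2^p<1}} r_1^{n-1}\, r_2^{n-1}\, dr_1\, dr_2.
\end{equation*}

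Next I would perform the substitution $u = r_1^p$, $v=r_2^p$, which has Jacobian $p^{-2}(uv)^{1/p-1}$, turning the double integral into
\begin{equation*}
\frac{1}{p^2}\iint_{\substack{u,v>0 \\ u+v<1}} u^{n/p - 1}\, v^{n/p - 1}\, du\, dv.
\end{equation*}
This is a standard Dirichlet integral, equal to $\Gamma(n/p)^2 / \Gamma(2n/p+1)$; this can be derived from the Beta function by integrating $v$ first on $(0,1-u)$ and recognizing the remaining integral as $B(n/p,\,n/p+1)/(n/p)$. Putting the pieces together yields
\begin{equation*}
\mathrm{Vol}(K\oplus_p K^\circ) \;=\; \frac{n^2}{p^2}\cdot\frac{\Gamma(n/p)^2}{\Gamma(2n/p+1)}\,\mathrm{Vol}(K)\,\mathrm{Vol}(K^\circ),
\end{equation*}
and using $\Gamma(n/p+1) = (n/p)\,\Gamma(n/p)$ the prefactor $n^2/p^2 \cdot \Gamma(n/p)^2$ collapses to $\Gamma(n/p+1)^2$. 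Since $\mathrm{Vol}(K\times K^\circ) = \mathrm{Vol}(K)\,\mathrm{Vol}(K^\circ)$, this is the claimed identity.

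There is no real obstacle here: the only place to be a little careful is the radial integration formula, which uses central symmetry (or more generally $0 \in \mathrm{int}\, K$) to ensure $\|\cdot\|$ and $\|\cdot\|_*$ are genuine norms with star-shaped sublevel sets, so the change of variable to "polar" coordinates adapted to $K$ and $K^\circ$ is valid. Everything else is a one-line Beta/Dirichlet evaluation and a single Gamma-function identity.
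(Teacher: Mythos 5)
Your proof is correct, and it takes a somewhat different route from the paper's. Both arguments start the same way in spirit (exploit the homogeneity $\mathrm{Vol}(rL)=r^n\mathrm{Vol}(L)$ to reduce everything to Beta/Gamma identities), but the mechanics differ. The paper first integrates out $y$ to get $\mathrm{Vol}(K^\circ)\int_{\|x\|\le 1}(1-\|x\|^p)^{n/p}\,dx$, and then evaluates the remaining $x$-integral by a layer-cake/Fubini trick, writing $\int f = \int_0^1 \mathrm{Vol}(\{f>t\})\,dt$ and ending up with the one-dimensional integral $\mathrm{Vol}(K)\int_0^1(1-t^{p/n})^{n/p}dt$, which a substitution turns into a single Beta function. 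You instead invoke the generalized polar (radial integration) formula in \emph{both} variables, reducing the whole volume to a two-dimensional Dirichlet integral $\iint_{u+v<1}u^{n/p-1}v^{n/p-1}\,du\,dv$ over the simplex. Your version is more symmetric in $x$ and $y$ and makes the appearance of $\Gamma(n/p+1)^2/\Gamma(2n/p+1)$ transparent as the classical Dirichlet evaluation; it also generalizes immediately to $p$-sums of two unrelated bodies $K\oplus_p L$. The paper's version avoids stating the radial integration formula explicitly, at the cost of a slightly less symmetric computation. All of your intermediate steps check out: the Jacobian $p^{-2}(uv)^{1/p-1}$, the Dirichlet value $\Gamma(n/p)^2/\Gamma(2n/p+1)$, and the final collapse of $n^2/p^2\cdot\Gamma(n/p)^2$ to $\Gamma(n/p+1)^2$ are all correct, as is your remark that the radial formula only needs $0\in\mathrm{int}\,K$ so that the sublevel sets are star-shaped and scale correctly.
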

\begin{proof}
\begin{align}
\mathrm{Vol}(K\oplus_{p}K^{\circ})&=\int_{\| x \| \leq 1} \int_{\| y \|_* \leq (1 - \|x\|^p)^{1/p}}dy dx = \int_{\| x\| \leq 1} \mathrm{Vol}((1 - \|x\|^p)^{1/p} K^{\circ} ) dx  \nonumber\\ 
 &= \mathrm{Vol}(K^{\circ}) \int_{\|x \|\leq 1} (1 - \|x\|^p)^{n/p} dx. \label{IntegrationOfVolume}
\end{align}
Here, we have use that $\mathrm{Vol}(r K)=r^n \mathrm{Vol}(K)$.  By Fubini's Theorem, we have 
\begin{align}
\int_{\|x \|\leq 1} (1 - \|x\|^p)^{n/p} dx&= \int_{\|x \|\leq 1} \int_0^{(1 - \|x\|^p)^{n/p}} dt dx = \int_0^1 \int_{\| x\| \leq (1-t^{p/n})^{1/p}} dx dt\nonumber \\
 &= \mathrm{Vol}(K) \int_0^1\left(1-t^{\frac{p}{n}}\right)^{n/p} dt, \label{Fubini}
\end{align}
By setting $t=u^{n/p}$ we get
\begin{align}
\int_0^1\left(1-t^{\frac{p}{n}}\right)^{\frac{n}{2}} dt &= \frac{n}{p} \int_0^1 (1-u)^{\frac{n}{p}} u^{\frac{n}{p}-1} du = \frac{n}{p} B\left(\frac{n}{p}+1, \frac{n}{p}\right) \nonumber\\
&= \frac{n}{p} \frac{\Gamma(\frac{n}{p} + 1) \Gamma(\frac{n}{p})}{\Gamma(\frac{2n}{p}+1)} = \frac{\Gamma^2(\frac{n}{p} + 1)}{\Gamma(\frac{2n}{p}+1)}, \label{GammaFunction}
\end{align}
where $B(a,b)=\int_0^1(1-t)^{a-1} t^{b-1} dt$ is the Beta function, and $\Gamma(a)=\int_0^{\infty}e^t t^{a-1} dt$ is the Gamma function.
Combining (\ref{IntegrationOfVolume}), (\ref{Fubini}) and (\ref{GammaFunction})  we get
$$
\mathrm{Vol}(r K\oplus_{p}K^{\circ}) =  \frac{\Gamma^2(\frac{n}{p} + 1)}{\Gamma(\frac{2n}{p} + 1)} \mathrm{Vol}(K\times K^{\circ})  r^{2n} .
$$

\end{proof}

The following result is standard in Convex Geometry (see e.g. \cite[Remark 1.7.14]{Sch13}). We include the proof for the sake of completeness. 
\begin{lemma}\label{DualNorm}
The dual norm $\| \nabla N(x) \|_* $ of the gradient $\nabla N(x)$ is equal to 1.
\end{lemma}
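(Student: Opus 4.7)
The plan is to combine two elementary ingredients: the homogeneity of $N$ (which yields Euler's identity) and the fact that $N$ is $1$-Lipschitz with respect to itself (which caps the directional derivatives).

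First I would note that $N$ is positively $1$-homogeneous, so differentiating $N(tx)=tN(x)$ at $t=1$ gives Euler's identity
\[
\langle \nabla N(x), x \rangle = N(x) = \|x\|.
\]
In particular, taking the unit vector $h_0 := x/\|x\|$, we have $\langle \nabla N(x), h_0\rangle = 1$, so
\[
\|\nabla N(x)\|_* \;=\; \sup_{\|h\|\le 1}\langle \nabla N(x), h\rangle \;\geq\; 1.
\]

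For the matching upper bound I would use the triangle inequality for $N$: for any $h \in \mathbb{R}^n$ and $t > 0$,
\[
N(x+th) - N(x) \;\leq\; N(th) \;=\; tN(h),
\]
so letting $t \to 0^+$ gives $\langle \nabla N(x), h\rangle \leq N(h) = \|h\|$. Taking the supremum over $h$ with $\|h\|\leq 1$ yields $\|\nabla N(x)\|_* \leq 1$.

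Combining the two bounds forces $\|\nabla N(x)\|_* = 1$. There is no real obstacle here; the only place where one must be careful is to ensure differentiability of $N$ at $x \neq 0$, which is exactly the smoothness hypothesis on $\|\cdot\|$ away from the origin stated at the beginning of Section~2.
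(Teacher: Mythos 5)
Your proof is correct, and the upper bound is obtained by a genuinely different (and simpler) route than the paper's. The lower bound is the same in both arguments: Euler's identity $\langle \nabla N(x), x\rangle = N(x)$ from $1$-homogeneity. For the upper bound, however, the paper computes $\sup_{\|y\|=1}\langle \nabla N(x), y\rangle$ by Lagrange multipliers, classifies the critical points, and then needs the auxiliary Lemma~\ref{ConvexFunction} (a convexity argument about flat pieces of the unit sphere) to show that any critical point with $\nabla N(x)=\lambda\nabla N(y)$, $\lambda>0$, gives the same value as $y=x/\|x\|$. You instead use the triangle inequality $N(x+th)-N(x)\le tN(h)$ and let $t\to 0^+$ to get $\langle \nabla N(x),h\rangle \le \|h\|$ directly, which kills the upper bound in two lines. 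Your route avoids Lemma~\ref{ConvexFunction} entirely, needs only differentiability of $N$ at the single point $x$ rather than smoothness of the whole unit sphere, and sidesteps the delicate point in the Lagrange-multiplier analysis (ensuring the supremum is attained and that all critical points have been accounted for). The only thing the paper's longer argument ``buys'' is the geometric description of where the supremum in the dual norm is achieved, which is not needed for the statement of the lemma itself.
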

\begin{proof}
It is east to show that $\| \nabla N(x) \|_* \geq 1$. Consider a derivative with respect to $s$ of the expression $N(sx)=sN(x), s>0$ we get $\langle \nabla N(x), x \rangle = N(x) = \| x \|.$ Now, from the definition of the dual norm, we have $\langle \nabla N(x), x \rangle \leq \| x \| \| \nabla N(x) \|_*,$ which is an analog of the Cauchy-Schwartz inequality, hence
$$
\| \nabla N(x) \|_* \geq \frac{\langle \nabla N(x), x \rangle}{\| x \|} = \frac{\| x \|}{\| x \|}=1.
$$
We will calculate $\| \nabla N(x) \|_* = \sup_{\| y \| =1} \langle \nabla N(x), y \rangle$ using Lagrange multipliers. Consider a function $F(y,\lambda) = \langle y, \nabla N(x) \rangle - \lambda (N(y) - 1)$. The derivatives are
\begin{align*}
D_y F(y,\lambda) h & = \langle h, \nabla N(x) \rangle - \lambda \langle h, \nabla N(y) \rangle = \langle h, \nabla N(x) - \lambda \nabla N(y) \rangle, \\
D_\lambda F(y, \lambda) &= N(y) - 1. 
 \end{align*}
 Since $\nabla N(sy) = \nabla N(y), s>0$, and $\nabla N(-y) = - \nabla N(y)$, critical points of $F$ contain the set $(y, \lambda) \in \left\{ \left( \frac{x}{\| x \|}, 1 \right), \left( -\frac{x}{\| x \|}, 1 \right), \left( \frac{x}{\| x \|}, -1 \right), \left( -\frac{x}{\| x \|}, -1 \right)\right\}.$ The other critical points are of form $(\frac{y}{\| y \|}, \lambda)$, where $\nabla N(x) = \lambda \nabla N(y)$. For such $y$ and if $\lambda > 0$,  we will show in Lemma \ref{ConvexFunction} that $$\left\langle \frac{y}{\| y \|}, \nabla N(x) \right\rangle = \left\langle \frac{x}{\| x\|}, \nabla N(x) \right\rangle.$$  
 Hence the maximum is achieved at $\left( \frac{x}{\| x \|}, 1 \right)$ which implies $ \| \nabla N(x) \|_* = \left\langle \frac{x}{\| x\|}, \nabla N(x) \right\rangle = 1.$
\end{proof}

\begin{lemma}\label{ConvexFunction}
If $\nabla N(x) = \lambda \nabla N(y)$ for $\lambda>0$ then $\left\langle \frac{y}{\| y \|}, \nabla N(x) \right\rangle = \left\langle \frac{x}{\| x\|}, \nabla N(x) \right\rangle.$
\end{lemma}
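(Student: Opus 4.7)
The plan is to reduce the claimed equality to the assertion $\lambda=1$, and then to derive this from the subgradient inequality for the convex function $N$. First, using the Euler identity $\langle \nabla N(z), z\rangle = \|z\|$ (obtained by differentiating $N(sz)=sN(z)$ at $s=1$, exactly as in the opening of the proof of Lemma \ref{DualNorm}), the right-hand side is simply $\|x\|/\|x\|=1$. Plugging the hypothesis $\nabla N(x)=\lambda \nabla N(y)$ into the left-hand side gives
$$
\left\langle \frac{y}{\|y\|},\, \nabla N(x)\right\rangle = \frac{\lambda}{\|y\|}\,\langle y, \nabla N(y)\rangle = \lambda,
$$
so the claim is equivalent to $\lambda = 1$.

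Next, I would exploit convexity of the norm $N$ (which is a $C^1$ convex function away from the origin) via the two subgradient inequalities
$$
N(y)\ge N(x)+\langle \nabla N(x), y-x\rangle, \qquad N(x)\ge N(y)+\langle \nabla N(y), x-y\rangle.
$$
Applying the Euler identity to cancel the $-\langle\nabla N(x),x\rangle$ and $-\langle\nabla N(y),y\rangle$ terms, these reduce to
$$
\langle \nabla N(x), y\rangle \le \|y\|, \qquad \langle \nabla N(y), x\rangle \le \|x\|.
$$
Substituting $\nabla N(x)=\lambda\nabla N(y)$ into the first bound and invoking the Euler identity again yields $\lambda\|y\|\le\|y\|$, i.e.\ $\lambda\le 1$; substituting into the second bound yields $\|x\|/\lambda \le \|x\|$, i.e.\ $\lambda\ge 1$. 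Hence $\lambda=1$, and the desired identity follows.

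There is no real obstacle here; the whole argument is a clean consequence of $1$-homogeneity and convexity of $N$. The one thing to be careful about is circularity: Lemma \ref{DualNorm} calls on this lemma to conclude $\|\nabla N(x)\|_*=1$, so the proof must not use this dual-norm identity. The argument above relies only on the Euler identity (which is established independently within the proof of Lemma \ref{DualNorm}) and the subgradient inequality, so no circular reasoning is introduced.
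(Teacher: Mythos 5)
Your proof is correct, and it takes a genuinely different route from the paper. The paper argues geometrically: it intersects the unit ball with $\mathrm{Span}\{x,y\}$ to get a planar convex body $A$, observes that the outward normals at $x/\|x\|$ and $y/\|y\|$ are proportional, and concludes that the segment between these two points lies in $\partial A$, which forces the two inner products to agree (equivalently, both points lie on the same supporting hyperplane with normal $\nabla N(x)$). You instead work analytically: the Euler identity $\langle \nabla N(z), z\rangle = \|z\|$ turns the claim into the statement $\lambda = 1$, and the two first-order convexity (subgradient) inequalities $N(y)\ge N(x)+\langle\nabla N(x),y-x\rangle$ and $N(x)\ge N(y)+\langle\nabla N(y),x-y\rangle$ give $\lambda\le 1$ and $\lambda\ge 1$ respectively. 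Your version has two advantages: it proves the slightly stronger fact that $\lambda=1$, and it replaces the paper's geometric assertion that the segment $\left[\frac{x}{\|x\|},\frac{y}{\|y\|}\right]$ lies in the boundary --- which is true but is itself stated without justification --- with two one-line applications of a standard inequality. The paper's version, on the other hand, conveys the geometric picture (the flat piece of the boundary in Figure \ref{Domain}) more vividly. You are also right that neither argument is circular: both use only homogeneity and convexity of $N$, not the conclusion $\|\nabla N(x)\|_*=1$ of Lemma \ref{DualNorm}.
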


\begin{proof}
Since $\| \cdot \|$ is a norm, the unit ball $B^n(1)= \{ x \in \mathbb{R}^n \mid \| x \| \leq 1 \}$ is a convex set. Now, we will consider $A:=B^{n}(1) \cap Span\{ x, y \}$. This is again a convex set, with the property that the normal vector to the $\partial A$ at $\frac{x}{\| x \|}$ is proportional to the normal vector at $\frac{y}{\| y \|}$. Since $A$ is convex, and $\partial A$ is smooth, we have that the segment $\left[ \frac{x}{\| x \|}, \frac{y}{\| y\|} \right] \subset \partial A$ is contained in the boundary (see Figure \ref{Domain}). This further implies that $\langle \frac{y}{\| y \|}, \nabla N(x) \rangle = \langle \frac{x}{\| x\|}, \nabla N(x) \rangle$.
\end{proof}

\begin{figure}[h!]
 \centering
   \begin{tikzpicture}
\draw[color=blue!40!cyan, very thick] (-2,2)  to (-1, 2) to [out=0, in=180] (0,2.5) to [out=0, in=180] (0.5, 2) to  (1,2);
\node[color=blue!40!cyan] at (-0.25,1.75) {$\| x \| = 1$};
\draw[->] (0,1) to (-2,2); 
\node[left] at (-1,1.2) {$\frac{x}{\| x\|}$};
\draw[->] (0,1) to (1,2);
\node[right] at (0.5,1.2) {$\frac{y}{\| y \|}$};
\draw[->] (-2,2) to (-2,2.5);
\node[left] at (-2,2.5){$\nabla N(x)$};
\draw[->] (1,2) to (1,2.5);
\node[right] at (1,2.5) {$\nabla N(y)$}; 
\end{tikzpicture}
    \caption{(Non)-convex domain}
    \label{Domain}
\end{figure}
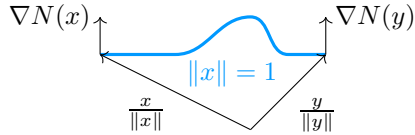

\begin{lemma}\label{dual of 2-sum}
If  a convex body $K$ is the unit ball of the norm $\sqrt{\|x\|^2 + \|y\|_{*}^2}$ then the dual body $K^{\circ}$ is given by the norm $\sqrt{\|x\|_{*}^2 + \|y\|^2}$. 
\end{lemma}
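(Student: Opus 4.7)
The plan is to compute the dual norm directly from the definition and reduce the optimization in $2n$ variables to a two-variable Cauchy--Schwarz problem. Write $\|(x,y)\|_{2,*}=\sqrt{\|x\|^2+\|y\|_*^2}$ for the norm defining $K$ on $\mathbb{R}^n\times\mathbb{R}^n$. By definition, the dual norm evaluated at $(u,v)$ is
\[
\|(u,v)\|_{K^{\circ}}=\sup\bigl\{\langle u,x\rangle+\langle v,y\rangle\;:\;\|x\|^2+\|y\|_*^2\le 1\bigr\}.
\]
I would split this supremum by first fixing the magnitudes $a=\|x\|$ and $b=\|y\|_*$, then optimizing over the directions of $x$ and $y$ separately, and finally optimizing over $(a,b)$ in the unit disc of $\mathbb{R}^2$.

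For the inner optimizations, recall that for any norm $\|\cdot\|$ on $\mathbb{R}^n$ with dual $\|\cdot\|_*$ one has $\sup_{\|x\|=a}\langle u,x\rangle=a\|u\|_*$ and, by reflexivity in finite dimensions, $\sup_{\|y\|_*=b}\langle v,y\rangle=b\|v\|_{**}=b\|v\|$. Hence
\[
\|(u,v)\|_{K^{\circ}}=\sup_{a^2+b^2\le 1,\;a,b\ge 0}\bigl(a\|u\|_*+b\|v\|\bigr).
\]
A standard Cauchy--Schwarz argument (or a one-line Lagrange multiplier computation) now gives that this supremum equals $\sqrt{\|u\|_*^2+\|v\|^2}$, with the maximum attained at $(a,b)=(\|u\|_*,\|v\|)/\sqrt{\|u\|_*^2+\|v\|^2}$. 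This is exactly the norm claimed in the statement.

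I do not expect any genuine obstacle: the only subtlety is to justify that the overall supremum decouples into the two directional suprema and the planar optimization, which is immediate because the constraint $\|x\|^2+\|y\|_*^2\le 1$ depends on $x$ and $y$ only through $a$ and $b$, and because both $\|u\|_*$ and $\|v\|$ are finite. Reflexivity $\|\cdot\|_{**}=\|\cdot\|$ is used once and holds in any finite-dimensional normed space, so the argument applies to every centrally symmetric convex $K$ considered in the paper.
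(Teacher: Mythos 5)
Your proposal is correct, and it takes a genuinely different route from the paper. The paper computes $\|(x,y)\|_{2,*}^{*}$ by applying Lagrange multipliers to $F(a,b,\lambda)=\langle a,x\rangle+\langle b,y\rangle-\tfrac{\lambda}{2}(\|a\|^2+\|b\|_*^2-1)$ on the full $2n$-dimensional constraint set; the critical-point equations $\lambda\|a\|\nabla N(a)=x$ and $\lambda\|b\|_*\nabla N^*(b)=y$ are then resolved using the fact that $\|\nabla N\|_*=1$ (Lemma \ref{DualNorm}), which forces $\lambda=\sqrt{\|x\|_*^2+\|y\|^2}$. That computation stays within the paper's toolkit but implicitly requires the norms to be $C^1$ away from the origin. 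You instead decouple the supremum: since the constraint depends on $(x,y)$ only through $a=\|x\|$ and $b=\|y\|_*$, the directional optimizations produce $a\|u\|_*$ and $b\|v\|_{**}=b\|v\|$ (reflexivity of finite-dimensional norms), and the remaining planar problem is settled by Cauchy--Schwarz. This is more elementary, needs no smoothness hypothesis, and generalizes verbatim to show that the dual of $\|(x,y)\|_{p,*}$ is $(\|x\|_*^q+\|y\|^q)^{1/q}$ with $\tfrac1p+\tfrac1q=1$ --- a fact the paper does not record but which fits its theme. The only point worth spelling out in a final write-up is the decoupling identity itself, i.e.\ that
\[
\sup_{\|x\|^2+\|y\|_*^2\le 1}\bigl(\langle u,x\rangle+\langle v,y\rangle\bigr)
=\sup_{\substack{a,b\ge 0\\ a^2+b^2\le 1}}\Bigl(\sup_{\|x\|\le a}\langle u,x\rangle+\sup_{\|y\|_*\le b}\langle v,y\rangle\Bigr),
\]
which you correctly note is immediate; with that, your argument is complete.
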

\begin{proof}
Dual of the norm $\|(x,y)\|_{2,*} = \sqrt{\|x\|^2 + \|y\|_{*}^2}$ is equal to $$\|(x,y)\|_{2,*}^* = \sup_{\|a\|^2 + \|b\|_{*}^2=1} \langle a,x \rangle + \langle b, y \rangle.$$
Applying Lagrange multiplier theorem to the function $$F(a,b, \lambda) =  \langle a,x \rangle + \langle b, y \rangle - \frac{\lambda}{2}( \|a\|^2 + \|b\|_{*}^2 - 1),$$
we get that the critical points satisfy $\lambda \|a\| \nabla N(a) = x$, and $\lambda \|b\|_{*} \nabla N^*(b) = x$. Using that $\| \nabla N(a)\|_{*} = 1$ and $\| \nabla N^*(b) \| = 1$ we get $\lambda = \sqrt{\|x\|_{*}^2 + \|y\|^2 }$, which further implies
$$
\|(x,y)\|_{2,*}^* = \langle a, \lambda \|a\| \nabla N(a)\rangle + \langle b, \lambda \|b\|_{*} \nabla N^*(b) \rangle = \sqrt{\|x\|_{*}^2 + \|y\|^2 }.
$$
\end{proof}
{\footnotesize
\bibliography{citations}

\begin{thebibliography}{AAKO14}

\bibitem[AAKO14]{AAKO14}
S.~Arstein-Avidan, R.~Karasev, and Y.~Ostrover.
\newblock From symplectic measurements to the {M}ahler conjecture.
\newblock {\em Duke Mathematical Journal}, 163:2003--2022, 2014.

\bibitem[AK17]{AK17}
A.~Akopyan and R.~Karasev.
\newblock Estimating symplectic capacities from lengths of closed curves on the
  unit spheres.
\newblock Preprint arXiv:1801.00242, 2017.

\bibitem[BK22]{BK22}
M.~Berezovik and R.~Karasev.
\newblock Symplectic polarity and {M}ahler's conjecture.
\newblock Preprint arXiv:2211.14630, 2022.

\bibitem[Bro23]{Br23}
F.~Bro{\'c}i{\'c}.
\newblock Riemannian distance and symplectic embeddings in the cotangent
  bundle.
\newblock Preprint arXiv:2303.12752, 2023.

\bibitem[CGH23]{CGH23}
D.~Cristofaro-Gardiner and R.~Hind.
\newblock On the agreement of symplectic capacities in high dimension.
\newblock Preprint arXiv:2307.12125, 2023.

\bibitem[EH89]{EH89}
I.~Ekeland and H.~Hofer.
\newblock Symplectic topology and hamiltonian dynamics.
\newblock {\em Math Z}, 200:355–378, 1989.

\bibitem[GHR22]{GHR22}
J.~Gutt, M.~Hutchings, and V.G.B. Ramos.
\newblock Examples around the strong viterbo conjecture.
\newblock {\em J. Fixed Point Theory Appl.}, 24(41), 2022.

\bibitem[GO16]{GO16}
D.~E. Gluskin and Y.~Ostrover.
\newblock Asymptotic equivalence of symplectic capacities.
\newblock {\em Comment. Math. Helv.}, 91(1):131--144, 2016.

\bibitem[Gro85]{Gr85}
M.~Gromov.
\newblock Pseudo holomorphic curves in symplectic manifolds.
\newblock {\em Invent. Math.}, 82:307--347, 1985.

\bibitem[HK19]{Ki19}
P.~Haim-Kislev.
\newblock On the symplectic size of convex polytopes.
\newblock {\em Geom. Funct. Anal.}, 29:440–463, 2019.

\bibitem[HKO23]{KO23}
P.~Haim-Kislev and Y.~Ostrover.
\newblock Remarks on symplectic capacities of $p$-products.
\newblock {\em Int. J. of Math.}, 34(4), 2023.

\bibitem[HZ90]{HZ90}
H.~Hofer and E.~Zehnder.
\newblock A new capacity for symplectic manifolds.
\newblock In Paul~H. Rabinowitz and Eduard Zehnder, editors, {\em Analysis, et
  Cetera}, pages 405--427. Academic Press, 1990.

\bibitem[OR22]{OR22}
Y.~Ostrover and V.~G.~B. Ramos.
\newblock Symplectic embeddings of the lp sum of two discs.
\newblock {\em J. Topol. Anal.}, 14:793--821, 2022.

\bibitem[Sch13]{Sch13}
R.~Schneider.
\newblock {\em Convex Bodies: The Brunn–Minkowski Theory}.
\newblock Encyclopedia of Mathematics and its Applications. Cambridge
  University Press, 2 edition, 2013.

\bibitem[Vit00]{Vi00}
C.~Viterbo.
\newblock Metric and isoperimetric problems in symplectic geometry.
\newblock {\em Journal of American Mathematical Society}, 13:411--431, 2000.

\end{thebibliography}
\bibliographystyle{alpha}
}
\Address
\end{document}